\numberwithin{equation}{section}
\let\originalleft\left
\let\originalright\right
\renewcommand{\left}{\mathopen{}\mathclose\bgroup\originalleft}
\renewcommand{\right}{\aftergroup\egroup\originalright}
\newcommand{\N}{\mathbb{N}}
\newcommand{\R}{\mathbb{R}}
\newcommand{\Z}{\mathbb{Z}}
\newcommand{\E}{\mathbb{E}}
\renewcommand{\P}{\mathbb{P}}
\newcommand{\dd}{\text{d}}
\renewcommand{\P}{\mathbb{P}}
\newtheorem{theorem}{Theorem}[section]
\newtheorem*{theorem*}{Theorem}
\newtheorem{lemma}[theorem]{Lemma}
\newtheorem{proposition}[theorem]{Proposition}
\newtheorem*{proposition*}{Proposition}
\newtheorem{condition}{Condition}
\theoremstyle{definition}
\newtheorem{remark}[theorem]{Remark}
\newcommand{\relmiddle}[1]{\mathrel{}\middle#1\mathrel{}}
\newcommand{\cond}{\relmiddle{|}}
\newcommand{\1}{\mathbbm 1}
\newcommand{\p}{{p^*}}
\begin{document}

\title{Moment characterization of the weak disorder phase for directed polymers in a class of unbounded environments}
\author[R.~Fukushima]{Ryoki Fukushima}
\address{Department of Mathematics, University of Tsukuba, 1-1-1 Tennodai, Tsukuba, Ibaraki 305--8571, Japan}
\email{ryoki@math.tsukuba.ac.jp}

\author[S.~Junk]{Stefan Junk}
\address{Advanced Institute for Materials Research Tohoku University Mathematical Group, 2-1-1 Katahira, Aoba-ku, Sendai, 980-8577 Japan}
\email{sjunk@tohoku.ac.jp}

\begin{abstract}
  For a directed polymer model in random environment, a characterization of the weak disorder phase in terms of the moment of the renormalized partition function has been proved in [S.~Junk: Communications in Mathematical Physics 389, 1087--1097 (2022)]. We extend this characterization to a large class of unbounded environments which includes many commonly used distributions.
\end{abstract}

\maketitle

\section{Introduction}
\label{sec:intro}
We consider a model of directed polymer in random environment. Let $(X=(X_j)_{j\ge 0}, P^{\text{SRW}})$ be the simple random walk on $\Z^d$ starting at the origin and $((\omega_{j,x})_{(j,x)\in \N\times\Z^d},\P)$ be a sequence of independent and identically distributed random variables satisfying
\begin{align}\label{eq:expmom}
	e^{\lambda(\beta)}:=\E\big[e^{\beta \omega_{0,0}}\big]<\infty\text{ for all $\beta\ge 0$.}
\end{align}

Then we define the law of the polymer of length $n$ at the inverse temperature $\beta \ge 0$ by
\begin{equation}
  \label{eq:polymer}
  \dd \mu_{\omega,n}^\beta(\dd X)=\frac{1}{Z_n^\beta(\omega)}\exp\Big(\beta\sum_{j=1}^n \omega_{j,X_j}\Big)P^\text{SRW}(\dd X),
\end{equation}
where $Z_n^\beta(\omega)=E^{\text{SRW}}[\exp(\beta\sum_{j=1}^n \omega_{j,X_j})]$ is the normalizing constant, called the \emph{partition function} of the model. Under this measure, the random walk is attracted by the sites where $\omega$ is positive, and repelled by the sites where it is negative. Thus we expect that the behavior of the polymer is strongly affected by the environment when $\beta$ is large. 

This intuition is made precise in~\cite{CSY03,CY06} under the assumption $\E\big[e^{\beta \omega_{0,0}}\big]<\infty$ for all $\beta\in \R$. In spatial dimension $d\geq 3$, there exists $\beta_{cr}\in(0,\infty)$ such that for $\beta<\beta_{cr}$, 
\begin{align}
  \label{eq:WD}
  e^{-n\lambda(\beta)}Z_n^\beta(\omega) \xrightarrow{n\to \infty}W_\infty^\beta(\omega)>0, \quad \P\text{-a.s.},
% I think this is only true under the stronger assumption E[e^{beta |omega|}] < infty?
% RF: That's right. I assume {eq:expmom} for all $\beta \in \R$.
\end{align}
whereas for $\beta>\beta_{cr}$, 
\begin{align}
  \label{eq:SD}
  e^{-n\lambda(\beta)}Z_n^\beta(\omega) \xrightarrow{n\to \infty}0, \quad \P\text{-a.s.}
\end{align}
As one can readily verify that the annealed partition function satisfies $E[Z_n^\beta]=e^{n\lambda(\beta)}$, the above shows that the quenched and annealed partition functions are comparable for $\beta<\beta_{cr}$ and contrary for $\beta>\beta_{cr}$. This indicates that the effect of disorder is weak in the former phase and strong in the latter phase with a drastic change in behavior across $\beta_{cr}$. We refer the interested reader to~\cite{CSY03,CY06}. 

\smallskip The proof of the aforementioned results relies on the fact that $W_n^\beta(\omega):=e^{-n\lambda(\beta)}Z_n^\beta(\omega)$ is a non-negative martingale under $\P$ with the filtration $\mathcal{F}_n:=\sigma(\omega_{j,x}\colon j\le n, x\in\Z^d)$, and one can further show  that the phase~\eqref{eq:WD} is characterized by the unform integrability of $W_n^\beta(\omega)$. But in order to further analyze the weak disorder phase, it is desirable to have a stronger property for $(W_n^\beta(\omega))_{n\ge 0}$. The second author has recently proved in~\cite{J21_1} that for $\beta<\beta_{cr}$, the martingale $(W_n^\beta(\omega))_{n\ge 0}$ is $L^p$-bounded for some $p>1$, under the assumption that the random potential $\omega$ is bounded from above. The main result of this paper extends this characterization to a large class of unbounded environments. 

\section{Main result}
\label{sec:result}

We introduce the following condition for the environment $\omega$.
\begin{condition}\label{cond:1}
For $\beta>0$, there exist $A_1=A_1(\beta)>1$ and $c_1=c_1(\beta)>0$ such that, for all $A>A_1$,
%% Changed A_0 -> A_1
\begin{align}\label{eq:cond1}
	E\left[\left.e^{\beta\omega}\;\right|\;\omega>A\right]\leq c_1e^{\beta A}.
\end{align}
\end{condition}
This condition strengthens the assumption \eqref{eq:expmom} of finite exponential moments by requiring a control on the overshoot when $\omega$ is conditioned to be large. It does not seem to be very restrictive and holds for many commonly used distributions, although we stress that there are distributions that satisfy \eqref{eq:expmom} but not Condition \ref{cond:1}. We elaborate on these matters in Section~\ref{sec:cond}.

\smallskip The following is the main result of this paper. 
\begin{theorem}
  \label{thm:main}
% changed W_infty>0 to P(W_infty>0)>0.
  Let $\beta$ be such that $\P(W_\infty^\beta>0)>0$ and assume that $\omega$ satisfies \eqref{eq:expmom} and  Condition \ref{cond:1}.Then there exists $p=p(\beta)>1$ such that 
  \begin{align}
    \label{eq:L^p-bdd}
    \sup_{n\in \N}\|W_n^\beta\|_p<\infty. 
  \end{align}
%% new
Moreover, the set of $p>1$ such that \eqref{eq:L^p-bdd} holds is open.
%% end of new part
\end{theorem}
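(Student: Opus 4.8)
The plan is to prove the quantitative $L^p$-boundedness statement \eqref{eq:L^p-bdd} first and then derive the openness claim as a soft consequence. The heuristic is the standard one: the renormalized partition function $W_n^\beta$ is a bounded martingale in $L^p$ for $p$ slightly above $1$ precisely when $W_\infty^\beta$ is non-degenerate, and the obstruction to $L^p$-boundedness comes entirely from rare but very large values of the environment along the polymer path. So the first step is to decompose the environment at a large threshold $A$: write $\omega_{j,x} = \omega_{j,x}\mathbbm 1\{\omega_{j,x}\le A\} + \omega_{j,x}\mathbbm 1\{\omega_{j,x}> A\}$ and correspondingly factor $e^{\beta\omega_{j,x}}$. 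Along a fixed path, at most a Poisson-like number of sites carry the large part, and Condition~\ref{cond:1} is exactly what is needed to control the conditional expectation of $e^{\beta\omega}$ on the event $\{\omega>A\}$ by $c_1 e^{\beta A}$, i.e. to say that the overshoot above $A$ contributes only a bounded multiplicative factor per exceedance. This lets one compare $W_n^\beta$ with the analogous martingale $\widehat W_n^{\beta,A}$ built from the truncated environment $\omega\wedge A$ (or $\omega\mathbbm 1\{\omega\le A\}$), up to a correction factor that is a product over the exceedance sites.

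The second step is to run the second-author's argument from \cite{J21_1} on the truncated model. For fixed $A$ the truncated environment is bounded from above, so \cite{J21_1} applies and yields $\sup_n \|\widehat W_n^{\beta,A}\|_p<\infty$ for some $p=p(\beta,A)>1$, \emph{provided} the truncated model is still in the weak-disorder phase, i.e. $\P(\widehat W_\infty^{\beta,A}>0)>0$. One must therefore show that truncating at a sufficiently large level does not destroy weak disorder: since $W_\infty^\beta>0$ with positive probability by hypothesis and truncation only decreases the weights, a continuity/monotonicity argument (the truncated partition functions increase to the untruncated one as $A\to\infty$, and weak disorder is an open condition in the relevant sense) gives that for $A$ large enough the truncated model remains non-degenerate. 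Combining with the exceedance correction from step one — estimating the $L^p$ norm of the product of the overshoot factors via independence and Condition~\ref{cond:1}, choosing $A$ large so that the density of exceedances is small enough to keep this factor finite in $L^{p'}$ for some $p'$ slightly bigger than the $p$ from \cite{J21_1}, then interpolating — yields \eqref{eq:L^p-bdd} for the original $W_n^\beta$ with some exponent $p>1$.

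For the openness statement, the clean route is to combine $L^p$-boundedness with the martingale property. Suppose \eqref{eq:L^p-bdd} holds for some $p_0>1$; we want it for all $p$ in a neighbourhood of $p_0$. The point is that an $L^{p_0}$-bounded martingale that converges almost surely and in $L^1$ to a strictly positive limit automatically has better integrability: one can either invoke a reverse-Hölder / Gehring-type self-improvement, or argue directly that the increments $W_{n+1}^\beta/W_n^\beta$ have, uniformly in $n$, a conditional law with a tail controlled by Condition~\ref{cond:1}, so that $\sup_n\|W_n^\beta\|_{p_0}<\infty$ forces $\sup_n\|W_n^\beta\|_{p_0+\varepsilon}<\infty$ for some $\varepsilon>0$. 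Together with the trivial fact that \eqref{eq:L^p-bdd} for exponent $p$ implies it for all exponents in $(1,p]$ by Jensen, this shows the set of admissible $p$ is open.

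The main obstacle I expect is step one: getting a \emph{uniform in $n$} comparison between $W_n^\beta$ and the truncated martingale in which the error is genuinely a finite-$L^{p'}$ random variable. The subtlety is that the number of large exceedances visited by the polymer is itself weighted by the partition function, so one cannot simply treat the exceedance sites and the random walk as independent; one needs a change-of-measure or size-biasing argument to decouple them, and Condition~\ref{cond:1} must be used precisely to absorb the size-biased overshoot. Making this quantitative — and in particular choosing the threshold $A$ and the exponents $p<p'$ in the right order so that both the truncated-model input from \cite{J21_1} and the exceedance correction survive — is the technical heart of the argument.
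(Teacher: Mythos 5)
Your proposal takes a genuinely different route from the paper, and as written it leaves the hardest part unresolved.

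The paper does not truncate the environment or treat \cite{J21_1} as a black box. Instead it opens up the proof of \cite{J21_1}, identifies the \emph{single} place where the boundedness $\omega\le K$ is used — namely to control the overshoot of $W^\beta$ past the stopping time $\tau(t)=\inf\{n:W_n^\beta\ge t\}$, i.e. to obtain the bound
\begin{align*}
\E\big[(W_k^\beta)^p\,\1_{\{\tau(t)=k\}}\big]\le C\,t^p\,\P(\tau(t)=k),
\end{align*}
uniformly in $t>1$ and $p\in[1,2]$ — and replaces this step. The key observation is that the one-step ratio $W_k^\beta/W_{k-1}^\beta=\sum_x\alpha_xY_x$ is a convex combination of i.i.d.\ copies of $Y=e^{\beta\omega-\lambda(\beta)}$ with $\mathcal F_{k-1}$-measurable weights. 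Lemma~3.1 then shows that Condition~\ref{cond:1} on a single $\omega$ upgrades to a uniform conditional-tail estimate $\E[(\sum_i\alpha_iY_i)^p\mid\sum_i\alpha_iY_i>A]\le c_3A^p$ for \emph{all} convex combinations simultaneously — this is the genuinely new technical content, proved by splitting on whether some single summand exceeds $A$ and then estimating $\E[N\mid N\ge1]$ and $\E[N^2\mid N\ge1]$ for $N$ the number of exceedances. With this in hand, the rest of \cite{J21_1}'s proof (including the openness of the set of good $p$) goes through verbatim, because nothing else there uses boundedness.

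Your truncation-and-compare strategy has two concrete gaps. First, you assert that for $A$ large the truncated model remains in weak disorder because ``weak disorder is an open condition in the relevant sense''; this is not a known fact. Truncating at level $A$ changes $\lambda(\beta)$ and the law of the renormalized weight $Y=e^{\beta\omega-\lambda}$, and there is no off-the-shelf monotonicity or continuity theorem comparing $\P(\widehat W_\infty^{\beta,A}>0)$ to $\P(W_\infty^\beta>0)$; proving such a statement would likely require an argument of comparable difficulty to the theorem itself. Second, you explicitly flag that the exceedance correction — controlling the $L^{p'}$ norm of the product of overshoot factors while decoupling the exceedance locations from the size-biased polymer path — is ``the technical heart of the argument'' and sketch only that a change-of-measure ``must be used.'' This is precisely the difficulty the paper's Lemma~3.1 is designed to bypass: by staying with the stopping-time decomposition of \cite{J21_1} and estimating the conditional overshoot of the martingale increment directly, the paper never needs to decouple the path from the large sites at all. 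Finally, for openness you propose a reverse-H\"older/Gehring self-improvement; the paper does not need this, since openness already comes out of the \cite{J21_1} argument once the replacement step is in place.
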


\begin{remark}
If $\lim_{n\to\infty}W_n^\beta=0$, then $W_n^\beta$ is not uniformly integrable and hence~\eqref{eq:L^p-bdd} necessarily fails. Thus the weak disorder is characterized by the finiteness of a $p$-th moment. 
\end{remark}

\begin{remark} %% new
	In \cite{J21_1}, it was further shown that if $\omega$ is bounded from below, then $\sup_n\E[W_n^{-\varepsilon}]<\infty$ for some $\varepsilon>0$. The argument in this paper can easily be generalized to show that the same holds whenever $\omega$ satisfies the straightforward generalization of Condition \ref{cond:1} to the negative tail.
\end{remark} %% end of new part

\begin{remark}
It is an interesting problem to describe the dependence of the optimal exponent $p^*(\beta):=\sup\{p\colon (W_n^\beta)_{n\in\N}\text{ is $L^p$ bounded}\}$ as a function of $\beta$. For bounded environments, it has been shown in \cite{J22} that $\p(\beta)\geq 1+2/d$ whenever $W_\infty^\beta>0$, so that $\beta\mapsto p^*(\beta)$ has a discontinuity at $\beta_{cr}$. It is natural to expect that the same holds in general. 
\end{remark}
\section{Extension of Condition~\ref{cond:1}}

As will be explained in detail below, the main step in proving Theorem \ref{thm:main} is to control the overshoot of $W_\tau$ at a stopping time $\tau$, which takes the form
\begin{align}\label{eq:convex_comb}
	\frac{W_{\tau}^\beta}{W_{\tau-1}^\beta}=	\sum_x \alpha_x e^{\beta\omega_x-\lambda(\beta)}
\end{align}
for a certain choice of probability weights $(\alpha_x)_{x\in\Z^d}$. The purpose of this section is to translate the Condition \ref{cond:1} on $\omega$ into a statement about such convex combinations.

\smallskip First, we state a condition satisfied by $e^{\beta\omega-\lambda(\beta)}$ whenever $\omega$ satisfies Condition \ref{cond:1}. 
\begin{condition}
  \label{cond:2}
The random variable $Y$ is non-negative with $\E[Y]=1$, $\E[Y^2]<\infty$ and there exist $A_2>1$ and $c_2>0$ such that, for all $p\in [1,2]$ and $A\ge A_2$, 
\begin{align}
\label{eq:cond2}
  \E\left[Y^p \cond Y> A\right] \le c_2A^p.
\end{align}
\end{condition}

The next condition requires additionally that \eqref{eq:cond2} extends to convex combinations.
\begin{condition}\label{cond:3}
The random variable $Y$ is non-negative with $\E[Y]=1$, $\E[Y^2]<\infty$ and there exist $A_3>1$ and $c_3>0$ such that the following holds: If $(Y_i)_{i\in I}$ are i.i.d. copies of $Y$ and $(\alpha_{i})_{i \in I}$ is a collection non-negative numbers with $\sum_{i\in I}\alpha_i=1$, then for all $p\in[1,2]$ and $A\geq A_3$
\begin{align}\label{eq:cond3}
	\E\left[\Big(\textstyle{\sum_{i\in I}\,} \alpha_i Y_i\Big)^p \cond \textstyle{\sum_{i\in I}\,} \alpha_i Y_i > A\right] \le c_3A^p.
\end{align}
\end{condition}

We now show that both conditions follow from Condition \ref{cond:1}.

\begin{lemma}\label{lem:claim2}
\begin{enumerate}
\item [(i)] If $\omega$ satisfies Condition \ref{cond:1}, then $Y:=e^{\beta\omega-\lambda(\beta)}$ satisfies Condition \ref{cond:2}.
\item [(ii)] If a random variable $Y$ satisfies Condition \ref{cond:2}, then it also satisfies Condition \ref{cond:3}.
\end{enumerate}
\end{lemma}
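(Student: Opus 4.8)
The plan follows the two parts of the statement.

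For (i), first note that $Y:=e^{\beta\omega-\lambda(\beta)}$ satisfies $\E[Y]=1$ and $\E[Y^2]=e^{\lambda(2\beta)-2\lambda(\beta)}<\infty$ by the definition of $\lambda$ and by \eqref{eq:expmom}. For the conditional moment bound I reduce to $p=2$: on $\{Y>A\}$ with $A>1$ one has $Y^p=Y^2Y^{p-2}\le A^{p-2}Y^2$ since $p\le 2$, so $\E[Y^p\mid Y>A]\le A^{p-2}\E[Y^2\mid Y>A]$, and it suffices to bound $\E[Y^2\mid Y>A]$ by $c_2A^2$. Writing $a(A):=\beta^{-1}(\log A+\lambda(\beta))$ one has $\{Y>A\}=\{\omega>a(A)\}$ and $\E[Y^2\mid Y>A]=e^{-2\lambda(\beta)}\E[e^{2\beta\omega}\mid\omega>a(A)]$; Condition~\ref{cond:1} applied with the parameter $2\beta$ (valid once $a(A)>A_1(2\beta)$, i.e.\ once $A$ exceeds an explicit constant) gives $\E[e^{2\beta\omega}\mid\omega>a(A)]\le c_1(2\beta)e^{2\beta a(A)}=c_1(2\beta)e^{2\lambda(\beta)}A^2$, hence $\E[Y^2\mid Y>A]\le c_1(2\beta)A^2$. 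Taking $c_2=c_1(2\beta)$ and $A_2$ the corresponding threshold (enlarged to be $>1$) finishes (i); this part is routine.

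For (ii) write $S:=\sum_{i}\alpha_iY_i$ and $\xi_i:=\alpha_iY_i$. The same reduction $S^p\le A^{p-2}S^2$ on $\{S>A\}$ leaves me to find $c_3,A_3$ with $\E[S^2\1_{S>A}]\le c_3A^2\P(S>A)$ for $A\ge A_3$. Two structural observations drive the argument: each $\xi_i$ inherits Condition~\ref{cond:2} with the same constants, since $\E[\xi_i^p\mid\xi_i>t]=\alpha_i^p\E[Y^p\mid Y>t/\alpha_i]\le c_2t^p$ whenever $t\ge A_2$ (using $\alpha_i\le1$); and $\E[S]=1$ while $\E[S^2]\le\E[Y^2]=:M$ (as $\sum_i\alpha_i^2\le1$), with the same bound for every partial sum $S^{(i)}:=\sum_{j\ne i}\xi_j$. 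I then split $\{S>A\}=E_1\sqcup E_2$ with $E_1=\{\exists i:\xi_i>A\}$ and $E_2=\{S>A,\ \max_i\xi_i\le A\}$. On $E_1$: from $\1_{E_1}\le\sum_i\1_{\xi_i>A}$, the decomposition $S=\xi_i+S^{(i)}$ with $S^{(i)}$ independent of $\xi_i$, the inequality $S^2\le2\xi_i^2+2(S^{(i)})^2$, and Condition~\ref{cond:2} for $\xi_i$, one gets $\E[S^2\1_{E_1}]\le(2c_2+2M/A^2)A^2\Lambda$ with $\Lambda:=\sum_i\P(\xi_i>A)$. If $\Lambda\ge1$ then $\P(S>A)\ge\P(E_1)\ge1-e^{-1}$ and the full bound $\E[S^2\1_{S>A}]\le\E[S^2]\le M\le CA^2\P(S>A)$ is trivial for $A\ge A_3$; if $\Lambda<1$ then $\P(S>A)\ge\P(E_1)\ge\Lambda/2$, so $\E[S^2\1_{E_1}]\le CA^2\P(S>A)$. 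Hence $E_1$ (and, in the first case, everything) is controlled.

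The main work is $E_2$, where every summand is $\le A$, so $S$ coincides with the truncation $\widehat S:=\sum_i(\xi_i\wedge A)$ --- a sum of independent $[0,A]$-valued variables with $\E[\widehat S]\le1$ and $\sum_i\E[(\xi_i\wedge A)^2]\le M$ --- and since $\widehat S\le S$ it is enough to prove $\E[\widehat S^2\1_{\widehat S>A}]\le CA^2\P(\widehat S>A)$. This is the main obstacle. A plain Chernoff bound with $\lambda=\theta/A$ gives $\P(\widehat S>t)\le e^{o(1)}e^{-\theta t/A}$ for $t\ge A$ (every fixed $\theta$, $A$ large), whence $\E[\widehat S^2\1_{\widehat S>A}]\le A^2\P(\widehat S>A)+\varepsilon(\theta)A^2$ with $\varepsilon(\theta)\to0$; but the additive term is not absorbed once $\P(\widehat S>A)$ is small, and this probability can be exponentially small in $\lvert I\rvert$ (e.g.\ $Y$ exponential, $\alpha_i\equiv1/n$). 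Equivalently, one needs --- uniformly in the weights --- the tail regularity $\int_A^\infty t\,\P(S>t)\,dt\lesssim A^2\P(S>A)$, the exact analogue of Condition~\ref{cond:2} for $S$ itself. Proving it must reconcile the two mechanisms behind $\{S>A\}$, namely a single dominant summand versus a collective Cramér-type deviation of many moderate ones, uniformly and including the regime where $\P(S>A)$ is only polynomially small. I would attack this by a self-improving estimate: bound $\E[S^2\1_{S>A}]$ by $A^2\P(S>A)$ plus a small multiple $\delta$ of the supremum of $\E[S^2\1_{S>A}]/(A^2\P(S>A))$ over all convex combinations (the factor $\delta$ arising from summands that are only a small fraction of $A$, whose presence in $\{S>A\}$ already forces a smaller-scale deviation of the remainder), and then close the loop --- combined with the Chernoff bounds above and the polynomial tail $\P(Y>x)\lesssim x^{-\gamma}$ with $\gamma=2+2/(c_2-1)>2$ that Condition~\ref{cond:2} extracts, which is precisely what makes the relevant integral converge.
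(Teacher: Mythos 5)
Part (i) and your treatment of the event $E_1=\{\exists i:\alpha_iY_i>A\}$ are fine (the latter by a slightly different route than the paper, which instead bounds $\E[N\mid N\geq 1]$ and $\E[N^2\mid N\geq 1]$ for $N=\sum_i\1_{\{\alpha_iY_i>A\}}$ and expands the square, but your $1-e^{-\Lambda}$ argument works). The problem is the case you yourself flag as the main obstacle, $E_2=\{S>A,\ \max_i\alpha_iY_i\le A\}$: there you have no proof, only a sketch of a ``self-improving estimate'' that is not carried out, and your Chernoff attempt indeed fails because the additive error cannot be absorbed when $\P(S>A)$ is exponentially small. So as it stands the proposal has a genuine gap precisely at the collective-deviation case, and the speculative ingredients you list (the polynomial tail $\P(Y>x)\lesssim x^{-\gamma}$, which is in fact derivable from Condition~\ref{cond:2}, plus an unproven fixed-point bootstrap) do not close it.

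The missing idea is that no lower bound on $\P(S>A)$ is needed at all: the paper handles this case with the stopping time $\tau:=\inf\{i\colon\sum_{j\le i}\alpha_jY_j>A\}$. On $\{N=0\}$ every summand is at most $A$, so the overshoot at $\tau$ is at most $A$ and $\sum_{i\le\tau}\alpha_iY_i\le 2A$; moreover, conditioned on $\{\tau=i\}$ the variables $(Y_{i+j})_{j\ge1}$ are unconditioned, so
\begin{equation*}
\E\Big[S^2\,\1_{\{N=0\}}\,\1_{\{S>A\}}\Big]\le \E\Big[\big(2A+\textstyle{\sum_{i>\tau}}\alpha_iY_i\big)^2\1_{\{\tau<\infty\}}\Big]\le C(A^2+1)\,\P(\tau<\infty)=C(A^2+1)\,\P(S>A),
\end{equation*}
using only $\E[Y]=1$, $\E[Y^2]<\infty$ and $\sum_i\alpha_i=1$. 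The factor $\P(S>A)$ comes out multiplicatively from $\P(\tau<\infty)$, which is exactly what defeats the regime that blocks your approach (e.g.\ $\alpha_i\equiv 1/n$ with exponentially small $\P(S>A)$). With this replacement of your $E_2$ step, the rest of your argument goes through and matches the paper's proof in structure.
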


\begin{proof}
The proof of \textbf{part (i)} is simple. 
For $A\geq A_2:=e^{\beta A_1(2\beta)-\lambda(\beta)}$, we can use Condition~\ref{cond:1} to get
\begin{align*}
\E[Y^2|Y>A]&=\E\Big[e^{2\beta\omega}\;\Big|\;\omega>\frac{1}{\beta}(\log A+\lambda(\beta))\Big]e^{-2\lambda(\beta)}\\
	   &\leq c_0(2\beta) e^{2\beta \frac{1}{\beta}(\log A+\lambda(\beta))}e^{-2\lambda(\beta)}\\
	   &=:c_1 A^2. 
\end{align*}
The extension to $p\in[1,2)$ follows from Jensen's inequality. 

\smallskip The proof of \textbf{part (ii)} is more involved. In the following, we use $C$ for positive constants depending only on $\E[Y_i^2]$, $A_2$ and $c_2$, whose values may change from line to line.  Let $A\ge A_3:=A_2$ and $N:=\sum_i \1_{\{\alpha_iY_i>A\}}$. We separately consider the case where all the summands are small ($N=0$) and the cases where the event $\sum_i\alpha_iY_i>A$ is realized due to a single large summand ($N\geq 1$). In the first case, 
we have 
\begin{equation}\label{eq:case1}
	\E\Big[\Big(\textstyle{\sum_{i}\,} \alpha_i Y_i\Big)^2\1_{\{N=0\}}\;\Big|\; \textstyle{\sum_i\,} \alpha_i Y_i>A\Big]
	\leq \E\Big[\Big(\textstyle{\sum_i\,} \alpha_i Y_i\1_{\{\alpha_iY_i\leq A\}}\Big)^2\;\Big|\; \textstyle{\sum_{i}\,} \alpha_i Y_i>A\Big]
\end{equation}
since $Y_i=Y_i\1_{\{\alpha_iY_i\leq A\}}$ for all $i$ on $\{N=0\}$. Let $\tau:=\inf\{i\colon \sum_{j\leq i}\alpha_jY_j>A\}$ and observe that on $\{\sum_i \alpha_i Y_i>A\}=\{\tau<\infty\}$,
\begin{align*}
\sum_{i\leq \tau}\alpha_iY_i\1_{\{\alpha_iY_i\leq A\}}\leq \sum_{i<\tau}\alpha_iY_i+\alpha_\tau Y_{\tau}\1_{\{\alpha_\tau Y_\tau \leq A\}}\leq 2A.
\end{align*}
Note also that conditioned on $\tau=i$, the remaining variables $(Y_{j+i})_{j\ge 1}$ obey the unconditioned law $\P$. Therefore, 
\begin{equation}
  \label{eq:case1-bound}
\begin{split}
  \E\left[\Big(\textstyle{\sum_{i}\,} \alpha_i Y_i\Big)^2\1_{\{N=0\}}\cond \textstyle{\sum_{i}\,} \alpha_i Y_i>A\right]
  &\leq \E\left[\Big(2A+\textstyle{\sum_{i>\tau}\,}\alpha_i Y_i\Big)^2\cond \tau<\infty\right]\\
  &\leq \E\left[\Big(2A+\textstyle{\sum_{i\in I}\,} \alpha_i Y_i\Big)^2\right]\\
%  &\leq \E\left[4A^2+2\sum_{k} \alpha_k Y_k^2\right]\\
  &\leq C(A^2+1),
\end{split}
\end{equation}
where in the last line, we have used $\sum_{i\in I}\alpha_i=1$ and that $Y_1$ has a finite second moment. In the second case $N\ge 1$, we use $\{N\geq 1\}\subseteq\{\sum_i\alpha_iY_i>A\}$ to obtain
\begin{equation}\label{eq:case2}
  \E\Big[\Big(\textstyle{\sum_{i}\,} \alpha_i Y_i\Big)^2\1_{\{N\geq 1\}}\;\Big|\; \textstyle{\sum_{i}\,} \alpha_i Y_i>A\Big]
  \leq \E\Big[\Big(\textstyle{\sum_{i}\,} \alpha_i Y_i\Big)^2\;\Big|\; N\geq 1\Big].
  \end{equation}
Let $q_i:=\P(\alpha_iY_i>A\mid N\geq 1)$ and observe that 
\begin{equation}
  \label{eq:square}
\begin{split}
\alpha_i^2\E[Y_i^2\mid N\geq 1]
&\le \alpha_i^2\E[Y_i^2\mid \alpha_iY_i>A]q_i+\alpha_i^2\E[Y_i^2\mid \alpha_iY_i\leq A]\\
&\leq c_2A^2q_i+\alpha_i^2\E[Y_i^2],
\end{split}
\end{equation}
where we  have used Condition~\ref{cond:2} for the first term (note that $A/\alpha_i\geq A_2$)  and the negative correlation between $Y_i^2$ and $\1_{\{\alpha_iY_i\leq A\}}$ for the second term. 

Similarly, for $i\neq j$, let $q_{i,j}:=\P(\alpha_iY_i>A,\alpha_jY_j>A\mid N\geq 1)$ and observe that 
\begin{equation}
  \label{eq:cross}
\begin{split}
\alpha_i\alpha_j\E[Y_iY_j\mid N\geq 1]&\leq \alpha_i\alpha_j\big(q_{i,j}\E[Y_i\mid \alpha_i Y_i>A]\E[Y_j\mid \alpha_jY_j>A]\\
&\quad+q_i\E[Y_i\mid \alpha_i Y_i>A]+q_j\E[Y_j\mid \alpha_jY_j>A]\\
&\quad+\E[Y_iY_j\1_{\{\alpha_i Y_i\le A\}}\1_{\{\alpha_j Y_j\le A\}}\mid \textstyle{\max_{k\neq i,j}\alpha_kY_k}> A]\big)\\
&\leq C\big(q_{i,j} A^2+\alpha_jq_i A+\alpha_iq_j A+\alpha_i\alpha_j\big),
\end{split}
\end{equation}
where we have used Condition~\ref{cond:1} for the first three terms and that $Y_iY_j\1_{\{\alpha_i Y_i\le A\}}\1_{\{\alpha_j Y_j\le A\}}$ is independent of $\{\max_{k\neq i,j}\alpha_kY_k> A\}$ for the last term. To bound the right-hand side of \eqref{eq:case2}, we are going to sum~\eqref{eq:square} over $i$ and~\eqref{eq:cross} over $i\neq j$. Note that $\sum_{i}q_i=\E[N\mid N\geq 1]$ and $\sum_{i\neq j}q_{i,j}\le \E[N^2\mid N\geq 1]$. We have the following bounds on these quantities. 
\begin{lemma}  \label{lem:N}
In the above setup, it holds that $\E[N\mid N\geq 1]\leq 2$ and $\E[N^2\mid N\geq 1]\leq 5$. 
\end{lemma}
The proof of this lemma will be given below. Now, summing~\eqref{eq:square} over $i$ and~\eqref{eq:cross} over $i\neq j$ and then using Lemma~\ref{lem:N}, it follows that the left-hand side of~\eqref{eq:case2} is bounded by $C(A^2+1)$. Combining this with~\eqref{eq:case1-bound} and recalling $A\ge 1$, we get

\begin{align*}
  \E\Big[\Big(\textstyle{\sum_{i}\,} \alpha_i Y_i\Big)^2 \;\Big|\; \textstyle{\sum_{i}\,} \alpha_i Y_i > A\Big] \le C (A^2+1)\leq 2CA^2.
\end{align*}

Finally, the claim for $p\in[1,2)$ follows as before by applying Jensen's inequality to the above. 
\end{proof}

\begin{proof}[Proof of Lemma \ref{lem:N}]
	Let $\sigma=\inf\{i \colon \alpha_i Y_i >A\}$ and write $N=\1_{\sigma<\infty}+\sum_{i>\sigma} \1_{\{\alpha_iY_i>A\}}$. Conditioned on $\sigma=i$, the random variables $(Y_{i+j})_{j\geq 1}$ obey the unconditioned law $\P$. Therefore,
  \begin{align*}
  \E[N\mid N\geq 1]&=\E[N\mid \sigma<\infty]\leq 1+\E[N],\\
  \E[N^2\mid N\geq 1]&=\E[N^2\mid \sigma<\infty]\leq \E[(1+N)^2],
  \end{align*}
  and hence it suffices to prove that $\E[N]\leq 1$ and $\E[N^2]\leq 2$. Both follow from the Markov inequality:
  \begin{align*}
  \E[N]&=\sum_i\P(Y_i>A/\alpha_i)\leq \E[Y_1]\sum_{i}\frac{\alpha_i}A=\frac{\E[Y_1]}{A},\\
  \E[N^2]&=\sum_i\P(Y_i>A/\alpha_k)+\sum_{i\neq j}\P(Y_i>A/\alpha_i)\P(Y_j>A/\alpha_j)\\
  &\leq \frac{\E[Y_1]}{A}+\frac{\E[Y_1]^2}{A^2}.
  \end{align*}
Recalling $\E[Y_1]=1$ and $A\geq 1$, this implies the desired bounds.
\end{proof}

\section{Proof of Theorem~\ref{thm:main}}
In this section, we prove Theorem~\ref{thm:main}. Let us start by recalling the relevant steps of the proof of~\cite[Theorem 1.1 (ii)]{J21_1}. For $t>1$, define the stopping time
\begin{equation}
  \tau(t):=\inf\left\{n\in \N\colon W_n^\beta \ge t\right\}
\end{equation}
and the pinned version of  $W_n^\beta$ as follows:
\begin{equation}
  W_{n,x}^\beta:=E^\text{SRW}\left[\exp\left(\sum_{t=1}^n (\beta\omega_{t,X_t}-\lambda(\beta))\right); X_n=x\right]
\end{equation}
Then, by using the Markov property for the simple random walk, we write on $\{\tau(t)\leq n\}$
\begin{equation}
  \label{eq:stopping}
\begin{split}
  W_n^\beta&=\sum_{x\in\Z^d} W_{\tau(t),x}^\beta \left(W_{n-\tau(t)}^\beta \circ \theta_{\tau(t),x}\right)\\
  &=W_{\tau(t)}^\beta\sum_{x\in\Z^d} \mu_{\omega,\tau(t)}^\beta(X_{\tau(t)}=x) \left(W_{n-\tau(t)}^\beta \circ \theta_{\tau(t),x}\right),
\end{split}
\end{equation}
where $\theta_{k,x}$ stands for the time-space shift of the environment. By \eqref{eq:stopping} and Jensen's inequality, we have 
\begin{align}\label{eq:argument}
  \E\left[\big(W_n^\beta\big)^p \1_{\{\tau(t)=k\}}\right]
  \le \E\left[\big(W_k^\beta\big)^p \1_{\{\tau(t)=k\}}\right]\E\left[\big(W_{n-k}^\beta\big)^p\right].
\end{align}
To continue the argument, we need the following bound on the second factor, uniformly in $t>1$ and $p\in[1,2]$: 
\begin{align}
\label{eq:need}
\E\left[\big(W_k^\beta\big)^p \1_{\{\tau(t)=k\}}\right]\leq Ct^p\P(\tau(t)=k).
\end{align}
In \cite{J21_1}, the assumption $\omega_{t,x} \le K$ was used to ensure that $(W_{k}^\beta)^p \le e^{2 \beta K}t^p$ on $\{\tau(t)=k\}$, that is, the martingale does not overshoot much at the stopping time $\tau(t)$. 

\smallskip We replace this part of the argument by using Lemma~\ref{lem:claim2}. Let $c_3$ and $A_3$ be the constants obtained by applying Lemma~\ref{lem:claim2}(i)--(ii).  We now bound the left-hand side in \eqref{eq:need} by considering the cases $W_k^\beta\le A_3t$ and $W_k^\beta>A_3t$ separately. The first case is simple:
\begin{equation}
  \label{eq:firstcase}
  \E\left[\big(W_k^\beta\big)^p \1_{\{\tau(t)=k,W_k^\beta\le A_3t\}}\right]\le (A_3t)^p\P\big(\tau(t)=k,W_k^\beta\le A_3t\big).
\end{equation}
In the second case, we consider the conditional expectation given $\mathcal{F}_{k-1}$ to write
\begin{equation}
  \label{eq:stop@k}
  \E\left[\big(W_k^\beta\big)^p \1_{\{\tau(t)=k,W_k^\beta> A_3t\}}\right]
  =\E\left[\big(W_{k-1}^\beta\big)^p \1_{\{\tau(t)>k-1\}}\E\left[\left({W_k^\beta}/{W_{k-1}^\beta}\right)^p \1_{\{W_k^\beta>A_3t\}}\cond \mathcal{F}_{k-1}\right]\right].
\end{equation}
We further rewrite\footnote{In the following equation, we regard $\mu_{\omega,k-1}^\beta$ as a measure on the space of \emph{infinite} path while the interaction with the environment is restricted to time interval $[0,k-1]$.} 
\begin{align*}
%  {W_k^\beta}/{W_{k-1}^\beta}=\mu_{\omega,k-1}^\beta(X_k=x)e^{\beta\omega_{k,x}-\lambda(\beta)}\text{ and }\{W_k^\beta>c_0t\}=\{W_k^\beta/W_{k-1}^\beta>A_3t/W_{k-1}^\beta\}.
{W_k^\beta}/{W_{k-1}^\beta}=\textstyle{\sum_{x}\,} \alpha_x Y_x\text{ and }\big\{W_k^\beta>A_3t\big\}=\big\{\textstyle{\sum_{x}\,}\alpha_xY_x>A\big\},
\end{align*}
where $\alpha_x:=\mu_{\omega,k-1}^\beta(X_k=x)$, $Y_x:=e^{\beta\omega_{k,x}-\lambda(\beta)}$ and $A:=A_3t/W_{k-1}^\beta$. Then, noting that 
\begin{itemize}
  \item $(e^{\beta\omega_{k,x}-\lambda(\beta)})_{x\in\Z^d}$ is independent of $\mathcal{F}_{k-1}$,
  \item $\mu_{\omega,k-1}^\beta(X_k=x)$ is an $\mathcal{F}_{k-1}$-measurable probability measure on $\Z^d$ and 
  \item $t/W_{k-1}^\beta \ge 1$ on $\{\tau(t)>k-1\}$,
\end{itemize}
we can apply Lemma~\ref{lem:claim2} under $\P(\cdot\mid\mathcal{F}_{k-1})$ to obtain
\begin{align*}
  \E\left[\big({W_k^\beta}/{W_{k-1}^\beta}\big)^p \1_{\{W_k^\beta>A_3t\}}\cond \mathcal{F}_{k-1}\right]
  \le c_3\big(A_3t/W_{k-1}^\beta\big)^p\P\left(W_k^\beta/W_{k-1}^\beta> A_3t/W_{k-1}^\beta\cond \mathcal{F}_{k-1}\right).
\end{align*}
Substituting this into~\eqref{eq:stop@k} yields
\begin{equation}
  \label{eq:secondcase}
  \E\left[\big(W_k^\beta\big)^p \1_{\{\tau(t)=k,W_k^\beta>A_3t\}}\right]
  \le c_3A_3^2t^p \P(\tau(t)=k,W_k^\beta>A_3t). 
\end{equation}
Combining this bound with \eqref{eq:firstcase}, we obtain \eqref{eq:need}  and can thus repeat the argument in~\cite[eq.(20)]{J21_1}. 
% to obtain
% \begin{align}
%   \label{eq:(20)}
%   \E\left[\big(W_n^\beta\big)^p \right]
%   \le t^p + Ct^p\P(\tau(t)\le n) \E\left[\big(W_n^\beta\big)^p\right].
% \end{align}
Since the other parts of the proof of~\cite[Theorem~2.1 (ii)]{J21_1} do not rely on the boundedness assumption, the same argument proves Theorem~\ref{thm:main}.

\section{Discussion on Condition~\ref{cond:1}}
\label{sec:cond}
In this section, we discuss Condition~\ref{cond:1}. First, although it looks natural, it does not hold in general. For example, if $\omega$ is supported on $\{k^2\}_{k\in \N}$, then regardless the concrete form of the distribution of $\omega$, we have 
\begin{align*}
\E\left[e^{\beta\omega} \cond \omega> k^2\right]&=\E\left[e^{\beta\omega} \cond \omega \ge (k+1)^2\right]\geq e^{\beta(k+1)^2} 
%  &\ge ((k+1)!)^p,
\end{align*}
and hence Condition~\ref{cond:1} fails. 

Next, we see that Condition \ref{cond:1} is valid under a one-sided tail regularity assumption, which holds under certain upper and lower bounds on the tail. 
\begin{proposition}\label{prop:sufficient}
Let $\omega$ be a real-valued random variable. 
%modified because "for all values of beta" only applies to (ii) and (iii).
\begin{enumerate}
	\item [(i)] Assume that there exist $K>0$ and $M>2\beta$ such that
\begin{align}\label{eq:dom-var2}
	\limsup_{x\to\infty}\sup_{y\geq K}\frac{\P(\omega>x+y)e^{My}}{\P(\omega>x)}<\infty.
\end{align}
Then Condition \ref{cond:1} holds.
 \item[(ii)] Assume that there exist $c>0$ and a convex function $f$ satisfying $\lim_{x\to\infty}\frac{f(x)}x=\infty$ such that, for $x$ large enough,
 \begin{align}\label{eq:convex}
	 c^{-1}e^{-f(x)}\leq \P(\omega > x)\leq ce^{-f(x)}.
 \end{align}
Then Condition \ref{cond:1} holds for all values of $\beta$.
\item[(iii)] Assume that there exist $c>0$ and an increasing 
%RF: To avoid a pathology in the functional equation
%SJ: No problem to include it, but can such a pathology actually occur?
function $f$ satisfying $f(x+y)\geq f(x)f(y)$ such that, for $x$ large enough,
\begin{align}\label{eq:convex2}
	c^{-1}e^{-cf(x)}\leq \P(\omega > x)\leq ce^{-f(x)/c}.
\end{align}
Then Condition \ref{cond:1} holds for all values of $\beta$.
\end{enumerate}
\end{proposition}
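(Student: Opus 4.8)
The goal is to establish Proposition~\ref{prop:sufficient}, which asserts that Condition~\ref{cond:1} follows from any of the three tail-regularity hypotheses \eqref{eq:dom-var2}, \eqref{eq:convex}, or \eqref{eq:convex2}. The unifying idea is that Condition~\ref{cond:1} is a statement about the \emph{overshoot} of $\omega$ above a high level $A$: writing $\omega = A + R$ on $\{\omega > A\}$, we need $\E[e^{\beta R} \mid \omega > A]$ to be bounded uniformly in $A > A_1$, since then $\E[e^{\beta\omega}\mid\omega>A] = e^{\beta A}\E[e^{\beta R}\mid \omega>A] \le c_1 e^{\beta A}$. So in all three parts the plan is to reduce to a uniform bound on the conditional overshoot Laplace transform. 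The main obstacle throughout is converting pointwise tail bounds (which only compare $\P(\omega>x)$ at two values of $x$ up to constants) into an integrated statement about $\E[e^{\beta R}\mid \omega>A]$; the standard device is to write the conditional expectation as an integral of the conditional tail and split the range of integration at the threshold $K$ beyond which the regularity kicks in.

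For \textbf{part (i)}: I would first observe that \eqref{eq:dom-var2} gives a constant $C$ and a threshold $x_0$ such that $\P(\omega > x+y) \le C e^{-My}\P(\omega>x)$ for all $x \ge x_0$ and all $y \ge K$. Now fix $A \ge x_0$ and compute, via the layer-cake / integration-by-parts formula,
\begin{align*}
  \E[e^{\beta\omega}\1_{\{\omega>A\}}]
  = e^{\beta A}\P(\omega>A) + \beta\int_A^\infty e^{\beta x}\P(\omega>x)\,\dd x,
\end{align*}
and divide by $\P(\omega>A)$. For the integral, substitute $x = A + y$ and split into $0 \le y \le K$ and $y \ge K$. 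On the first piece, bound $\P(\omega>A+y)\le\P(\omega>A)$ and $e^{\beta y}\le e^{\beta K}$, giving a contribution $\le \beta K e^{\beta A}e^{\beta K}$. On the second piece, use the tail bound to get $\P(\omega>A+y)/\P(\omega>A) \le C e^{-My}$, so the contribution is $\le C\beta e^{\beta A}\int_K^\infty e^{(\beta-M)y}\,\dd y$, which is finite and bounded because $M > 2\beta > \beta$. Collecting terms yields $\E[e^{\beta\omega}\mid\omega>A] \le c_1 e^{\beta A}$ with $c_1$ depending only on $\beta, K, C, M$, which is exactly Condition~\ref{cond:1} with $A_1 = \max(x_0, 1)$.

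For \textbf{parts (ii) and (iii)}, the plan is to show that the respective hypotheses \emph{imply} a bound of the type \eqref{eq:dom-var2}, so that part (i) applies; this keeps the proof modular. In part (ii), convexity of $f$ with $f(x)/x \to \infty$ gives, for $x$ large, $f(x+y) - f(x) \ge f'(x^-)y \ge g(x) y$ where $g(x) := f'(x^-) \to \infty$; hence $\P(\omega>x+y)/\P(\omega>x) \le c^2 e^{-(f(x+y)-f(x))} \le c^2 e^{-g(x)y}$, and since $g(x)\to\infty$ we have $g(x) \ge M := 2\beta + 1$ for all $x$ beyond some $x_0$, verifying \eqref{eq:dom-var2} (indeed with the $\limsup$ being $0$) for any $\beta$; one should be a touch careful handling non-differentiability of the convex $f$ by using monotone difference quotients $\frac{f(x+y)-f(x)}{y} \ge \frac{f(x)-f(x-1)}{1}$ instead of derivatives, which is cleaner. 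In part (iii), the submultiplicativity $f(x+y)\ge f(x)f(y)$ together with $f$ increasing lets one write $f(x+y)/c - cf(x) \ge f(x)(f(y) - c^2)/c$ (using $\P(\omega>x)\ge c^{-1}e^{-cf(x)}$ in the denominator and $\P(\omega>x+y)\le ce^{-f(x+y)/c}$ in the numerator); since $f$ is increasing to infinity — note $f(x+y)\ge f(x)f(y)$ with $f>1$ eventually forces $f\to\infty$ — pick $K$ with $f(y) \ge c^2 + 1$ for $y \ge K$ and then, for $x$ large so that $f(x) \ge c M$, we get $f(x+y)/c - cf(x) \ge My$ provided also $f(x)/c \ge M y$... here one must instead argue $f(x)(f(y)-c^2)/c \ge My$ by noting $f(x)\to\infty$ dominates while $(f(y)-c^2)/c \ge (f(y)-c^2)/c$ grows, so for any fixed $M$ and all $x$ large enough this holds for all $y\ge K$; the $\limsup_{x\to\infty}$ structure of \eqref{eq:dom-var2} is exactly what accommodates this. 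The main delicacy in part (iii) is organizing the quantifiers so that the bound is uniform in $y \ge K$ while only asymptotic in $x$, which matches \eqref{eq:dom-var2} precisely — so the reduction to part (i) goes through, completing the proof for all $\beta$.
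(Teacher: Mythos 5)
Your parts (i) and (ii) are correct and essentially the same as the paper's argument: in (i) you integrate the tail bound beyond the threshold $K$ exactly as the paper does (the paper runs the identical computation at parameter $2\beta$, which your hypothesis $M>2\beta$ also covers), and in (ii) you deduce \eqref{eq:dom-var2} from convexity via monotone difference quotients, just as the paper does with the right derivative.

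Part (iii), however, has a genuine gap. After reducing to the inequality $f(x)\bigl(f(y)-c^2\bigr)/c \ge My$ for all $y\ge K$ once $x$ is large, you justify the uniformity in $y$ only by saying that ``$f(x)\to\infty$ dominates while $f(y)-c^2$ grows''. That is not a proof: for each fixed (large) $x$ the factor $f(x)$ is a constant, so comparing $f(x)\bigl(f(y)-c^2\bigr)/c$ with $My$ over \emph{all} $y\ge K$ requires $f(y)$ to grow at least linearly in $y$ at some definite positive rate, and an increasing $f$ tending to infinity (say, logarithmically) would not suffice; nothing in your sketch rules this out. Note also that the $\limsup_{x\to\infty}$ in \eqref{eq:dom-var2} does not absorb this issue, since the supremum over $y\ge K$ sits inside the limit in $x$, so the bound must hold uniformly in $y$ for each large $x$. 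The missing ingredient is exactly what the paper supplies: $\log f$ is superadditive and $f(y_0)>1$ for some $y_0$ (forced by $f\to\infty$), so Fekete's superadditive lemma yields $C>0$ with $f(y)\ge e^{Cy}$ for large $y$; with this exponential lower bound the required inequality holds uniformly in $y\ge K$ for all sufficiently large $x$ (treat a bounded range of $y$ by taking $x$ large, and large $y$ by $e^{Cy}\gg y$). Once this step is inserted, your reduction to part (i) goes through and coincides with the paper's proof.
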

This proposition covers many commonly used distributions. 
\begin{itemize}
	\item If $\omega$ has a logarithmically concave Lebesgues density, then $x\mapsto \P(\omega>x)$ is also logarithmically concave (see \cite[Theorem 2]{P71}) and hence \eqref{eq:convex} holds with $f(x):=-\log \P(\omega>x)$. Note also that $\lim_{x\to\infty}\frac{f(x)}{x}=\infty$ already follows from \eqref{eq:expmom}. This covers, for example, the Gaussian distribution or the Weibull distribution (with $\P(\omega>x)=ce^{-c'x^\alpha}$ for $\alpha> 1$).
% RF: I think we need the strict inequality here. 
	\item For the Poisson distribution, it is not hard to check \eqref{eq:dom-var2} directly.
	\item The (negative) Gumbel distribution, with $\P(\omega> x)=\exp({-e^{(x-c)/c'}})$, further satisfies \eqref{eq:convex2}. More generally, we can take $f(x)=e^{x^\alpha}$ with $\alpha\ge 1$ in \eqref{eq:convex2}. 
% It think, the Gumbel distribution as commonly defined as tail e^{-x} at +infinity and e^{-e^{-x}} at -infinity, so I  changed to "negative Gumbel distribution".
% RF: That's right. I changed the expression to avoid double superscripts. 
\end{itemize}

\begin{proof}
	\textbf{Part (i)}: By \eqref{eq:dom-var2}, there exist $K>0$, $M>2\beta$, $A_0>1$ and $C>0$ such that, for $y\geq K$, $A>A_1$ and $u>K+A$,
\begin{align*}
%\P(\omega>x+y)\leq C\P(\omega>x)e^{-My}.\\
\P(\omega>u)\leq C\P(\omega>A)e^{-Mu+MA}.
\end{align*}
Thus, for $A\geq A_1$,
\begin{align*}
	\E[e^{2\beta\omega}\1_{\omega>A}]&\leq \P(\omega>A)e^{2\beta (A+K)}+\E[e^{2\beta\omega}\1_{\omega>A+K}]\\
					 &=\P(\omega>A)e^{2\beta (A+K)}+\int_{e^{2\beta (A+K)}}^\infty \P(\omega>\log(t)/2\beta)\dd t\\
					 &\leq \P(\omega>A) \Big(e^{2\beta (A+K)}+Ce^{MA}\int_{e^{2\beta (A+K)}}^\infty e^{-M\log(t)/2\beta} \dd t\Big)\\
					 &=\P(\omega>A) \Big(e^{2\beta (A+K)}+\frac{C}{M/2\beta-1}e^{MA}e^{2\beta(A+K)(1-M/2\beta)}\Big)\\
					 &=: c_3\P(\omega>A)e^{2\beta A},
\end{align*}
where we have used  the assumption $M>2\beta$ to ensure the convergence of the last intergral.

\smallskip For \textbf{part (ii)}, it is now enough to verify \eqref{eq:dom-var2}. The convexity and the assumption on superlinear growth imply that there exists $x_0>0$ such that the right derivative $D_+f(x_0) \ge 3\beta$. Then for $x\geq x_0$ and $ y>0$, we have $f(x+y)-f(x) \ge 3\beta y$ and hence 
\begin{align*}
	\frac{\P(\omega>x+y)}{\P(\omega>x)}\leq c^2e^{-(f(x+y)-f(x))}%\leq c^2e^{-y D_+ f(x_0)}
  \leq c^2 e^{-3\beta y}.
\end{align*}
This implies \eqref{eq:dom-var2}.

For \textbf{part (iii)}, note that by the super-additive theorem there exists $C>0$ such that $f(x)\geq e^{Cx}$, hence for $y>2\log(c)/C$ and $x$ large enough,
\begin{align*}
	\frac{\P(\omega>x+y)}{\P(\omega>x)}\leq c^2\exp\Big(-f(x)\Big(\frac{f(x+y)}{cf(x)}-c\Big)\Big)\leq c^2e^{-f(x)(f(y)/c-c)}\leq c^2 e^{-3\beta y}.
\end{align*}
This again implies \eqref{eq:dom-var2} and we are done.
\end{proof}

\begin{remark}
In Section~\ref{sec:cond}, we rephrased Condition~\ref{cond:1} in terms of the random variable $Y:=e^{\beta\omega-\lambda(\beta)}$. Since some authors use this $Y$ as the random potential in the directed polymer model (see, for example, \cite{IS88,Sep12}), it might be of interest to rephrase also \eqref{eq:dom-var2}, which reads
\begin{align}\label{eq:dom-var}
	\text{there exist } K>1\text{ and } M>2\text{ such that }\limsup_{y\to \infty}\sup_{\lambda\ge K}\lambda^{M}\frac{\P(Y > \lambda y)}{\P(Y> y)} <\infty.
\end{align}
This is a one-sided regular variation condition. It appears, for example, in~\cite[Theorem~2.0.1]{BGT} and inspecting its proof, one can see that \eqref{eq:dom-var} follows from
\begin{align}
	\text{there exist } K>1, M>2 \text{ and } \rho<K^{-M} \text{ such that }\sup_{\lambda\in[K,K^2]}\limsup_{y\to\infty}\frac{\P(Y > \lambda y)}{\P(Y> y)}<\rho.
\end{align}
% \begin{proof}
%   \textcolor{red}{(To be commented out.)}
%   By the assumption, there exists $y_1>0$ such that 
%   \begin{align}
%     \label{eq:unif_lambda_y}
%     \sup_{y\ge y_1, \lambda\in[K,K^2]}\frac{\P(Y > \lambda y)}{\P(Y> y)}\leq \sup_{y\ge y_1}\frac{\P(Y > K y)}{\P(Y> y)}<K^{-M}.
%   \end{align}
%   For any $\lambda \ge K$, we can find $m \in \N$ such that $K^m\le \lambda<K^{m+1}$. Then by using~\eqref{eq:unif_lambda_y} repeatedly, we obtain
%   \begin{align*}
%     \frac{\P(Y > \lambda y)}{\P(Y> y)}&\le \frac{\P(Y > \lambda y)}{\P(Y> K^{m-1} y)}\prod_{k=1}^{m-1}\frac{\P(Y > K^k y)}{\P(Y> K^{k-1} y)}\\
%     &\le K^{-Mm}\\
%     &\le \lambda^{-M}. 
%   \end{align*}
% \end{proof}  
There are plenty of distributions that satisfy~\eqref{eq:dom-var}. For instance, if there exist $c,C>0$ and $\gamma>0$ such that 
\begin{align}
  \label{eq:Weibull}
  c\exp(-C y^\gamma)\le \P(Y> y)\le C\exp(-c y^\gamma)
\end{align}
holds for all sufficiently large $y$, then 
\begin{align*}
  \frac{\P(Y > \lambda y)}{\P(Y> y)} 
  &\le \frac{C}{c}\exp(-(c\lambda^\gamma-C) y^\gamma),
%  &\le \frac{C}{c}\exp(-(c\exp((\lambda^\gamma-1) y^\gamma)-C)\exp(y^\gamma)) \text{for double exponential type}\\
%  &\le \frac{C}{c}\exp(-(c\exp(\log^\alpha \lambda y-\log^\alpha y)-C)\exp(\log^\alpha y)) \text{for exponential log type}
\end{align*}
and~\eqref{eq:dom-var} follows. A similar argument applies to the case where $y^\gamma$ in~\eqref{eq:Weibull} is replaced by $\exp(y^\gamma)$ ($\gamma>0$) or $\exp(\log^\alpha y)$ $(\alpha>1$). 
\end{remark}

\section*{Acknowledgment} 
This work was supported by KAKENHI 21K03286, 22H00099 and 18H03672.  

\bibliographystyle{plain}
\bibliography{ref}

\end{document}